\documentclass{amsart}
\usepackage{amssymb}
\usepackage{enumitem}
\usepackage[colorlinks = true,
linkcolor = black,
urlcolor = blue,
citecolor = black]{hyperref}
\usepackage[margin=1in]{geometry}
\allowdisplaybreaks

\theoremstyle{definition} 
\newtheorem{thm}{Theorem}[section]
\newtheorem{cor}[thm]{Corollary}

\newtheorem{lem}[thm]{Lemma}

\newtheorem{rmk}[thm]{Remark}
\newtheorem{exmp}[thm]{Example}
\newtheorem{defn}[thm]{Definition}

\newcommand{\bb}{\mathbb}

\newcommand{\bZ}{\bb{Z}}
\newcommand{\bQ}{\bb{Q}}

\newcommand{\bF}{\bb{F}}
\newcommand{\bE}{\bb{E}}
\newcommand{\bP}{\bb{P}}
\newcommand{\fp}{\bF_p}
\newcommand{\zp}{\bZ_p}

\newcommand{\lt}{\left |}
\newcommand{\rt}{\right |}
\newcommand{\lf}{\left \lfloor}
\newcommand{\rf}{\right \rfloor}
\newcommand{\mr}{\mathrm}
\newcommand{\mf}{\mathfrak}
\newcommand{\al}{\alpha}

\DeclareMathOperator{\cok}{\mr{cok}}
\DeclareMathOperator{\Sur}{\mr{Sur}}
\DeclareMathOperator{\Hom}{\mr{Hom}}
\DeclareMathOperator{\Aut}{\mr{Aut}}
\DeclareMathOperator{\M}{\mr{M}}
\DeclareMathOperator{\rank}{\mr{rank}}
\begin{document}

\title[Inhomogeneously balanced random $p$-adic matrices]{Universality of the cokernels of random $p$-adic matrices with inhomogeneously balanced columns}
\author{Jungin Lee and Sungjin Park}
\date{}
\address{J. Lee -- Department of Mathematics, Ajou University, Suwon 16499, Republic of Korea \newline
\indent S. Park -- Department of Mathematics, Ajou University, Suwon 16499, Republic of Korea}
\email{jileemath@ajou.ac.kr, qkrtjd5421@ajou.ac.kr}

\begin{abstract}
In this paper, we prove universality of the distribution of the cokernels of random $p$-adic matrices with inhomogeneously balanced columns. More precisely, let $u \ge 0$ be an integer, and for each positive integer $n$, let $A(n)$ be a random $n \times (n+u)$ matrix over $\mathbb{Z}_p$ whose $i$-th column is $\alpha_n(i)$-balanced. We prove that if $\sum_{i=1}^{n+u} \exp(-\epsilon \alpha_n(i)n) \to 0$ as $n \to \infty$ for every $\epsilon>0$, then the cokernels of $A(n)$ converge in distribution, as $n \to \infty$, to the same limiting law as the cokernels of Haar-random $n \times (n+u)$ matrices over $\mathbb{Z}_p$. This extends a universality theorem of Nguyen and Wood to random $p$-adic matrices with inhomogeneously balanced columns.
\end{abstract}
\maketitle

\vspace{-5mm}
\section{Introduction} \label{Sec1}

Let $p$ be a prime. We denote by $\fp$ the finite field with $p$ elements and by $\zp$ the ring of $p$-adic integers. For a commutative ring $R$, we denote by $\M_{m \times n}(R)$ the set of all $m \times n$ matrices over $R$. The Cohen--Lenstra heuristics \cite{CL84} predict the distribution of the ideal class groups of imaginary quadratic number fields. 
Motivated by the function field analogue of the Cohen--Lenstra heuristics, Friedman and Washington \cite{FW89} studied the distribution of the cokernels of random $p$-adic matrices. They proved that if $A(n)$ is a random matrix in $\M_n(\zp)$ distributed according to the normalized Haar measure for each $n \ge 1$, then
\begin{equation*}
\lim_{n \to \infty} \bP(\cok(A(n)) \cong H) = \frac{1}{|\Aut(H)|} \prod_{i=1}^{\infty} (1-p^{-i})
\end{equation*}
for every finite abelian $p$-group $H$. Here, $\cok(A(n))$ denotes the cokernel of a matrix $A(n)$ and $\Aut(H)$ denotes the automorphism group of $H$.

Let $\al \in (0, 1/2]$ be a real number. Let $R$ be either $\bZ$, $\zp$, or $\bZ/a\bZ$ for some integer $a \ge 2$. A random variable $y$ taking values in $R$ is \textit{$\al$-balanced} if
\begin{equation*}
\bP(y \equiv r \pmod{\mf{m}}) \le 1-\al
\end{equation*}
for every maximal ideal $\mf{m}$ of $R$ and $r\in R/\mf{m}$. A random vector or matrix with entries in $R$ is \textit{$\al$-balanced} if its entries are independent and $\al$-balanced. We note that in the definition of an $\al$-balanced matrix, the entries are not required to be identically distributed.
Nguyen and Wood \cite[Theorem 4.1]{NW22} proved a universality theorem extending a result of Wood \cite{Woo19}. We recall the following $p$-adic form.

\begin{thm} \label{thm1a}
(\cite[Theorem 3.4]{Woo22}) Let $u \ge 0$ be an integer. Let $(\al_n)_{n \ge 1}$ be a sequence in $(0, 1/2]$ such that for every constant $\Delta > 0$, we have $\al_n \ge \frac{\Delta \log n}{n}$ for all sufficiently large $n$. Let $A(n)$ be an $\al_n$-balanced random matrix in $\M_{n \times (n+u)}(\zp)$ for each $n \ge 1$. Then for every finite abelian $p$-group $H$,
\begin{equation*}
\lim_{n \to \infty} \bP(\cok(A(n)) \cong H) = \frac{1}{|H|^u |\Aut(H)|} \prod_{i=1}^{\infty} (1-p^{-i-u}).
\end{equation*}
\end{thm}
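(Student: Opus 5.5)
The plan is the moment method of Wood. The first step reduces the theorem to a moment statement. For a finite abelian $p$-group $G$, the $G$-moment of $\cok(A(n))$ is $\bE\,|\Sur(\cok(A(n)),G)|$. We will show that
\begin{equation*}
\lim_{n\to\infty}\bE\,|\Sur(\cok(A(n)),G)| = |G|^{-u}
\end{equation*}
for every such $G$. These are exactly the moments of the limiting law in the statement, since a Haar-random matrix has $G$-moment $|G|^{-u}$. The moments $|G|^{-u}$ grow slowly enough to determine the distribution uniquely (Wood's uniqueness theorem for moments of $p$-groups). The robust form of that theorem then says convergence of moments implies convergence in distribution, which gives the limit formula.

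The second step fixes an exponent and rewrites the moment. Choose $a$ with $p^a G=0$, and work over $R=\bZ/p^a\bZ$, reducing $A(n)$ modulo $p^a$. Surjections $\cok(A)\to G$ are the same as surjections $F: R^n\to G$ with $F\circ A=0$. Hence
\begin{equation*}
\bE\,|\Sur(\cok A,G)|=\sum_{F\in\Sur(R^n,G)}\bP(FA=0).
\end{equation*}
Write $X_j$ for the columns of $A$; they are independent. For each $F$ this factors as
\begin{equation*}
\bP(FA=0)=\prod_{j=1}^{n+u}\bP(FX_j=0).
\end{equation*}
Fourier analysis gives
\begin{equation*}
\bP(FX_j=0)=|G|^{-1}\sum_{C\in \Hom(G,\bQ/\bZ)}\bE\,\zeta(C(FX_j)).
\end{equation*}

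The third step is the code/non-code decomposition, following Wood and Nguyen--Wood. Call $F$ a code of distance $\delta n$ if $F$ stays surjective after deleting any $\delta n$ coordinates. For codes, the $\al_n$-balancedness of entries shows that each nontrivial character term has size at most $\exp(-c\,\al_n\delta n)$. Therefore $\prod_j\bP(FX_j=0)=|G|^{-(n+u)}(1+o(1))$ uniformly over codes, provided $(n+u)\exp(-c\al_n\delta n)\to 0$. This holds because $\al_n\ge \Delta\log n/n$ for every $\Delta$. Codes make up almost all of the $\approx|G|^n$ surjections, so their total contribution tends to $|G|^{-u}$.

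The fourth step, which is the main obstacle, bounds the non-codes. I will stratify non-codes by their depth: the largest index $[G:H]$ of a subgroup $H$ such that $F$ restricted to some set of $\ell\le\delta n$ coordinates lands in $H$ for all others. For a given depth I will count the $F$'s, roughly $\binom{n}{\ell}|G|^{\ell}|H|^{n-\ell}$. Against this I need a bound on $\bP(FA=0)$ of the form $|H|^{-n}e^{-c\al_n \ell\cdot(\text{stuff})}$ times a loss factor. This bound comes from combining balancedness with a conditioning argument (Odlyzko-type bounds on $\bP(FX\in$ a subgroup$)$ by $\max(1-\al_n,\ldots)$). Summing over depths and $\ell$, the loss factors must be absorbed by $\binom{n}{\ell}\le e^{\ell\log n}$; here the condition $\al_n\gg\log n/n$ is used again, with $\delta$ chosen small relative to $G$. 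I would follow Wood's lemmas essentially verbatim, tracking that only independence and per-entry balancedness are used, so the entries need not be identically distributed. Combining the three estimates gives the moment limit, and the uniqueness theorem finishes the proof.
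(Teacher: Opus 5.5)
Your first three steps are sound and follow the paper's route: Wood's moment theorem, reduction modulo $p^d$, the column factorization \eqref{eq:column-factorization-sec2}, and the code estimate of Lemmas~\ref{lem2b} and \ref{lem2c}. \textbf{The gap is in the fourth step, which is exactly where the sparse regime $\al_n\asymp\log n/n$ matters.}

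The mechanism you cite, conditioning plus an Odlyzko-type bound on $\bP(FX\in\text{subgroup})$, gives only a factor $1-\al_n$ per column. This factor does not depend on the number $k$ of coordinates with $Fv_i\notin H$. Over $n+u$ columns you gain only $(1-\al_n)^{n+u}\approx e^{-\al_n n}$, which at $\al_n=\Delta\log n/n$ is merely $n^{-\Delta}$. The number of such $F$ (relative to $|H|^n$) is about $\binom{n}{k}C^k$, which is $e^{c(\delta)n}$ for $k\asymp\delta n$. So the resulting bound does not tend to $0$. This is why the depth lemmas of \cite{Woo19}, followed verbatim, do not reach $\al_n$ of order $\log n/n$. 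The $\ell$-dependent bound you ask for does not come out of the argument you describe.

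There is also a separate problem with your notion of depth. With the single threshold $\delta n$, $F$ restricted to the remaining coordinates need not be a code onto $H$ of distance $\gg n$. A further deletion of fewer than $\delta n$ coordinates may land in a smaller subgroup without contradicting maximality. To get the factor $|H|^{-1}+e^{-\al_n\delta n/a^2}$ you need the graded thresholds $\ell([G:H])\delta n$ and minimality of $H$, as in Definition~\ref{def2d}.

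The missing ingredient is a Littlewood--Offord (Fourier) estimate whose decay grows with $k$. Fix a maximal chain $H=G_{\ell(D)}\subset\cdots\subset G_0=G$, and let $k_j$ be the number of $i$ with $Fv_i\in G_{j-1}\setminus G_j$. Then each column satisfies
\[
\bP(FX=0)\le\bigl(|H|^{-1}+e^{-\al_n\delta n/a^2}\bigr)\prod_{j}\Bigl(p_j^{-1}+\tfrac{p_j-1}{p_j}e^{-\al_n k_j/p_j^2}\Bigr)
\]
(Lemma~\ref{lem2f}, from \cite{NW22}). This is combined with the count of Lemma~\ref{lem2e}.

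Even then, the bound $\binom{n}{k}\le e^{k\log n}$ handles only half of the sum, and two regimes are needed:
\begin{itemize}
\item When $\al_n k\lesssim 1$, the product over columns is at most $e^{-c\al_n kn}\le n^{-c\Delta k}$. This beats $(Cn)^k$ because $\Delta$ is arbitrary.
\item When $\al_n k\gtrsim 1$, the product is only at most $e^{-c'n}$. This beats $\sum_{k\le\delta n}\binom{n}{k}C^k\le e^{fn}$ only after $\delta$ is taken small in terms of $G$ (Lemmas~\ref{lem2g} and \ref{lem2h}).
\end{itemize}
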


By reduction modulo $p$, we obtain the following corollary for random matrices over $\fp$. We note that for an $n \times (n+u)$ matrix $A$ over $\fp$, $\rank_{\fp}(A)=n-k$ if and only if $\cok(A) \cong \fp^k$.

\begin{cor} \label{cor1b}
(\cite[Theorem 3.2]{Woo22}) Let $u \ge 0$ be an integer. Let $(\al_n)_{n \ge 1}$ be a sequence in $(0, 1/2]$ such that for every constant $\Delta > 0$, we have $\al_n \ge \frac{\Delta \log n}{n}$ for all sufficiently large $n$. Let $A(n)$ be an $\al_n$-balanced random matrix in $\M_{n \times (n+u)}(\fp)$ for each $n \ge 1$. Then for every nonnegative integer $k$,
\begin{equation*}
\lim_{n \to \infty} \bP(\rank_{\fp}(A(n)) = n-k)
= p^{-k(k+u)} \frac{\prod_{j=1}^{\infty} (1-p^{-j})}{\prod_{j=1}^{k} (1-p^{-j}) \prod_{j=1}^{k+u} (1-p^{-j})}.
\end{equation*}
\end{cor}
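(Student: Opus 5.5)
We derive Corollary~\ref{cor1b} from Theorem~\ref{thm1a} by lifting the matrices from $\fp$ to $\zp$ and then reducing back modulo $p$. Fix the identification of $\fp$ with the Teichm\"uller-free set of representatives $\{0,1,\dots,p-1\} \subset \zp$. Given an $\al_n$-balanced random matrix $A(n) \in \M_{n \times (n+u)}(\fp)$, let $\tilde A(n) \in \M_{n \times (n+u)}(\zp)$ be the matrix whose entries are these lifts of the entries of $A(n)$. The entries of $\tilde A(n)$ are independent. Since $\zp$ has the unique maximal ideal $p\zp$ with $\zp/p\zp = \fp$, and reduction of $\tilde A(n)$ recovers $A(n)$, each entry of $\tilde A(n)$ is $\al_n$-balanced in the sense of $\zp$. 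Therefore Theorem~\ref{thm1a} applies to $\tilde A(n)$.

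Next we relate the two cokernels. By right exactness of $\otimes_{\zp} \fp$,
\begin{equation*}
\cok(\tilde A(n)) \otimes_{\zp} \fp \cong \cok(A(n)).
\end{equation*}
For a finitely generated $\zp$-module $M$, the space $M/pM$ has $\fp$-dimension $k$ exactly when $M$ has $k$ cyclic factors. Hence $\rank_{\fp}(A(n)) = n-k$ if and only if $\cok(\tilde A(n))$ is a $\zp$-module with exactly $k$ cyclic summands. A priori $\cok(\tilde A(n))$ could be infinite. However, Theorem~\ref{thm1a} together with the fact that its limiting probabilities sum to $1$ over all finite abelian $p$-groups $H$ shows that this happens with probability tending to $0$. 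For $u \ge 0$ the sum-to-one identity is the standard Cohen--Lenstra mass formula.

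So the limit we want is
\begin{equation*}
\sum_{H :\, \mathrm{rk}_p(H)=k} \frac{1}{|H|^u|\Aut(H)|}\prod_{i\ge1}(1-p^{-i-u}).
\end{equation*}
Interchanging the limit with this infinite sum is the main analytic point. We handle it with a Fatou-type argument. Taking $\liminf$ term by term gives the lower bound for each $k$. The limiting values over all $k$ sum to $1$ by the mass formula, so the lower bounds force equality. This is the standard tightness argument, and the finite-$H$ total mass is exactly what makes it work.

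It remains to evaluate the sum over $H$ of $p$-rank $k$. This is the classical identity
\begin{equation*}
\sum_{\mathrm{rk}_p H = k} \frac{1}{|H|^u|\Aut H|} = \frac{p^{-k(k+u)}}{\prod_{j=1}^k(1-p^{-j})\prod_{j=1}^{k+u}(1-p^{-j})}\cdot\frac{\prod_{j\ge1}(1-p^{-j})}{\prod_{i\ge1}(1-p^{-i-u})},
\end{equation*}
which is due to Cohen--Lenstra. We prove it most easily by computing the same limit directly for uniformly random matrices over $\fp$ by counting matrices of rank $n-k$. Uniform $\fp$-matrices are the reductions of Haar $\zp$-matrices, so both computations describe the same limit. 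The simplification is then immediate.

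I expect the only genuinely delicate step to be the justification of the limit-sum interchange, and the mass formula handles it.
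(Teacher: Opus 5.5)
Your proof is correct and follows the route the paper indicates with ``by reduction modulo $p$'': lift $A(n)$ to $\zp$, apply Theorem~\ref{thm1a}, and reduce modulo $p$ using $\cok(\tilde A(n))\otimes_{\zp}\fp\cong\cok(A(n))$. The paper gives no further detail and defers to \cite{Woo22}; your Fatou-plus-mass-formula argument for exchanging the limit with the sum over groups of $p$-rank $k$, and your evaluation of that sum by comparison with uniform matrices over $\fp$, correctly fill in the steps it leaves implicit.
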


Similar universality results were proved for random symmetric \cite[Theorem 1.3]{Woo17} and alternating \cite[Theorem 1.13]{NW25} matrices over $\zp$, and for random Hermitian matrices over the ring of integers of a quadratic extension of $\bQ_p$ \cite[Theorem 1.6]{Lee23b}, in the case where $\al_n=\al>0$ is constant. Recently, in the case $u=0$, the first author \cite[Theorem 1.3]{Lee25} improved Corollary \ref{cor1b} to the sharp threshold $\al_n = \frac{c \log n}{n}$ for every constant $c>1$. 
Moreover, Jung, the first author, and Yu \cite[Theorem 1.5]{JLY26} established the corresponding sharp-threshold universality results for random non-symmetric, symmetric, and alternating matrices over $\zp$.

In all of the above results, all entries of the random matrix are assumed to satisfy the same balancedness condition. In this paper, we consider more general random $p$-adic matrices whose columns may satisfy different balancedness conditions. The following theorem is the main result of this paper.

\begin{thm} \label{thm1c}
Let $u \ge 0$ be an integer and $\al_n(1), \al_n(2), \ldots, \al_n(n+u) \in (0, 1/2]$ for every $n \ge 1$. Assume that
for every $\epsilon>0$,
\begin{equation} \label{eq1_condition}
\lim_{n \to \infty} \sum_{i=1}^{n+u} \exp \left( - \epsilon \al_n(i) n \right) = 0.
\end{equation}
Let $A(n)$ be a random matrix in $\M_{n \times (n+u)}(\zp)$ with independent entries such that the $i$-th column is $\al_n(i)$-balanced for every $i$. Then for every finite abelian $p$-group $H$,
\begin{equation*}
\lim_{n \to \infty} \bP(\cok(A(n)) \cong H) = \frac{1}{|H|^u |\Aut(H)|} \prod_{i=1}^{\infty} (1-p^{-i-u}).
\end{equation*}
\end{thm}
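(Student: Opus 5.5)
The plan is to use the moment method, following Wood and Nguyen--Wood. The $G$-moments $\bE\,|\Sur(\cok(A(n)),G)|$ determine the limiting distribution, because the limit moments $|G|^{-u}$ grow slowly enough. So it suffices to prove that for every finite abelian $p$-group $G$,
\begin{equation*}
\lim_{n \to \infty} \bE\,|\Sur(\cok(A(n)),G)| = |G|^{-u}.
\end{equation*}
Then the robust moment theorem of Wood (\cite[Theorem 8.3]{Woo17}) gives convergence in distribution to the Haar law of Theorem~\ref{thm1a}. To reduce to finite rings, fix $G$ with exponent dividing $p^e$ and work with $A(n) \bmod p^e$. We have $\bE|\Sur(\cok A,G)| = \sum_{F} \bP(FA=0)$, where the sum is over surjections $F\colon (\bZ/p^e)^n \to G$, viewed as $F \in \Sur(\bZ_p^n, G)$ with $FA$ meaning $F$ applied to each column. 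Columns are independent, so
\begin{equation*}
\bP(FA=0)=\prod_{i=1}^{n+u} \bP(F A_i = 0).
\end{equation*}
This column-wise factorization is what allows different balancedness parameters in different columns.

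Following Wood, I split the surjections $F$ into \emph{codes} and \emph{non-codes}, using depth. Say $F$ is a $\delta$-code if for every $\sigma \subset [n]$ with $|[n]\setminus\sigma| < \delta n$, the restriction $F|_{\sigma}$ is still surjective.

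For codes, the Fourier argument in \cite[Lemma 2.1]{Woo19} and \cite{NW22} gives, for an $\al$-balanced column $A_i$,
\begin{equation*}
\bigl|\bP(FA_i=0) - |G|^{-1}\bigr| \le |G|^{-1}\sum_{\chi\ne 1}\prod_j |\bE\,\chi(F(e_j)A_{ji})| \le \exp(-c\,\al\,\delta n)
\end{equation*}
with $c=c(G)>0$. The reason is that each nontrivial character is nonconstant on at least $\delta n$ coordinates, and each such coordinate contributes a factor at most $1-c'\al$. Taking the product over columns,
\begin{equation*}
\prod_i \bigl(|G|^{-1}+O(e^{-c\al_n(i)\delta n})\bigr) = |G|^{-n-u}\exp\Bigl(O\bigl(|G|\textstyle\sum_i e^{-c\delta\al_n(i)n}\bigr)\Bigr) = |G|^{-n-u}(1+o(1)),
\end{equation*}
using \eqref{eq1_condition} with $\epsilon=c\delta$. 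The number of codes is $|G|^n(1+o(1))$, so the codes contribute $|G|^{-u}+o(1)$. This is exactly where the hypothesis enters cleanly.

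The main obstacle is the non-codes, where the bound must be uniform over columns with wildly varying $\al_n(i)$. Wood's argument stratifies non-codes by the subgroup $H\subsetneq G$ and the depth $D$, so that $F$ is close to a code into $H$ outside a set of at most $\ell(D)\,\delta n$ coordinates. It then bounds $\bP(FA_i\in H)$ by $|H|/|G| + e^{-c\al\delta n}$ and $\bP(FA_i=0 \mid FA_i \in H)$ by something like $|G|^{-1}\dots$. The crucial input there is a per-column saving $\bP(FA_i=0)\le (1-c\al)^{\#\{j: F(e_j)\notin H\}}\cdot(\dots)$, summed against the count $\binom{n}{\ell\delta n}|G|^{\ell\delta n}|H|^{n}$. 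With variable $\al_n(i)$, I would keep the saving column by column:
\begin{equation*}
\prod_i \bigl(|H|/|G|+e^{-c\al_n(i)\delta n}\bigr)\le (|H|/|G|)^{n+u}\exp\Bigl(|G|\textstyle\sum_i e^{-c\al_n(i)\delta n}\Bigr).
\end{equation*}
The exponential factor is $1+o(1)$ by \eqref{eq1_condition}, so the column-wise errors always aggregate through $\sum_i e^{-\epsilon\al_n(i)n}$. The delicate point is the small-depth regime, where $F$ is nonsurjective on very few coordinates (about $\log$-many). Here Wood uses $\al\gg\log n/n$ to beat the $\binom{n}{k}$ entropy. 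I would replace this by a bound of the form $\prod_i(1-c\al_n(i))^{k}$ or $\prod_i\bigl(1-c\min(1,\al_n(i)k)\bigr)$. The hypothesis forces all but $o(1)$-many columns, and in fact all columns for large $n$, to have $\al_n(i)n\to\infty$ uniformly. Indeed, each term $e^{-\epsilon\al_n(i)n}$ must be small, so $\min_i \al_n(i)n\to\infty$; and $\sum_i e^{-\epsilon \al_n(i) n}\to 0$ for all $\epsilon$ gives $\sum_i\al_n(i) \gg \log n$ in aggregate. That aggregate saving $\exp(-ck\sum_i\al_n(i))$ should beat $n^{k}$. Checking that this aggregate bound really follows from \eqref{eq1_condition}, perhaps via $e^{-x}\le$ convexity estimates, is the step I expect to require the most care.
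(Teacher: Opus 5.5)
\textbf{Verdict: there is a genuine gap.} The non-code estimate is left open, and the route you suggest for it cannot work.

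\textbf{What is right.} Your overall architecture matches the paper: the moment method, the column factorization $\mathbb{P}(FA=0)=\prod_i\mathbb{P}(FA_i=0)$, and the code/non-code split. Your code estimate, with per-column error $e^{-c\alpha_n(i)\delta n}$ aggregated through \eqref{eq1_condition}, is exactly Lemma~\ref{lem2c}.

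\textbf{Where the gap is.} The non-code estimate is the only genuinely new difficulty, and your description of it is also inaccurate. For $F$ that is $\delta$-robust for $H$ with few coordinates outside $H$, $\mathbb{P}(FA_i\in H)$ is close to $1$, not $|H|/|G|$. The correct Nguyen--Wood bound along a maximal chain $H=G_\ell\subset\cdots\subset G_0=G$ is $(|H|^{-1}+e^{-\alpha\delta n/a^2})\prod_j(p_j^{-1}+\frac{p_j-1}{p_j}e^{-\alpha k_j/p_j^2})$. Its first factor, against the count $|H|^n$, gives $|H|^{-u}(1+o(1))$; a factor $(|H|/|G|)^{n+u}$ would make non-codes trivially negligible. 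After this reduction, you must show that for each prime $q$, each $C\ge 1$, and $\delta$ small,
\[
\sum_{m=1}^{\lfloor\delta n\rfloor}\binom{n}{m}C^m\prod_{i=1}^{n+u}\Bigl(q^{-1}+\tfrac{q-1}{q}\,e^{-\alpha_n(i)m/q^2}\Bigr)\longrightarrow 0.
\]

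\textbf{Why your suggested route fails.} The ``aggregate saving'' $\exp(-ck\sum_i\alpha_n(i))$ is not a valid bound. Every factor is at least $q^{-1}$, so a column with $\alpha_n(i)m\gtrsim 1$ saves only a constant, and the honest bound is $\exp(-c\sum_i\min(1,\alpha_n(i)m))$.

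Moreover, the two consequences you extract, $\min_i\alpha_n(i)n\to\infty$ and $\sum_i\alpha_n(i)/\log n\to\infty$, cannot suffice. Take $L=\lceil(\log n)^2\rceil$ columns with $\alpha=1/2$. Let the remaining columns have independent entries equal to $1$ with probability $\sqrt{\log n}/n$ and $0$ otherwise. Both properties hold (while \eqref{eq1_condition} fails), yet about $ne^{-\sqrt{\log n}}\gg L$ rows vanish outside the $L$ dense columns. So the corank mod $p$ tends to infinity and universality fails. Correspondingly, the displayed sum blows up at $m$ a large multiple of $\log n$.

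\textbf{The missing idea.} As in Lemma~\ref{lem2h}, transfer the hypothesis from scale $\delta n$ to every scale $m\le\delta n$ by convexity. Set $x_i=e^{-\alpha_n(i)/q^2}$, $N=n+u$ and $\gamma=\frac{q-1}{q}$.
\begin{enumerate}
\item For large $n$, $\sum_i x_i^{\delta n}\le 1$, so the power-mean inequality gives $\frac1N\sum_i x_i^m\le N^{-m/(\delta n)}$.
\item AM--GM then gives $\prod_i(q^{-1}+\gamma x_i^m)\le\exp(-\gamma(1-e^{-t})n)$ with $t=\frac{m\log n}{\delta n}$.
\item For $t<1$ this is at most $n^{-\gamma(1-e^{-1})m/\delta}$, which beats $\binom{n}{m}C^m\le(Cn)^m$ once $\delta$ is small.
\item For $t\ge 1$ it is at most $e^{-\gamma(1-e^{-1})n}$, which beats $\sum_{m\le\delta n}\binom{n}{m}C^m\le e^{fn}$ for $\delta$ small.
\end{enumerate}
The same power-mean step would also rescue your $\min$-formulation, since $\min(1,y)\ge 1-e^{-y}$.

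Finally, choose $\delta$ small enough simultaneously for the finitely many pairs $(q,C)=(p_j,|G_{j-1}|/|H|)$, with $\delta$ replaced by $\ell(D)\delta$. Then use that the sum over $(k_1,\dots,k_\ell)$ factors over $j$.
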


\begin{cor} \label{cor1d}
Let $u \ge 0$ be an integer and $\al_n(1), \al_n(2), \ldots, \al_n(n+u)$ be as in Theorem \ref{thm1c}. Let $A(n)$ be a random matrix in $\M_{n \times (n+u)}(\fp)$ with independent entries such that the $i$-th column is $\al_n(i)$-balanced for every $i$. Then for every nonnegative integer $k$,
\begin{equation*}
\lim_{n \to \infty} \bP(\rank_{\fp}(A(n)) = n-k)
= p^{-k(k+u)} \frac{\prod_{j=1}^{\infty} (1-p^{-j})}{\prod_{j=1}^{k} (1-p^{-j}) \prod_{j=1}^{k+u} (1-p^{-j})}.
\end{equation*}
\end{cor}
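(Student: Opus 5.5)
Corollary \ref{cor1d} is deduced from Theorem \ref{thm1c} by lifting to $\zp$, in the same way that Corollary \ref{cor1b} follows from Theorem \ref{thm1a}. Let $A(n)$ be as in Corollary \ref{cor1d}. Choose the Teichm\"uller-type lift $\iota : \fp \to \zp$ sending $r$ to its representative in $\{0,1,\ldots,p-1\}$, and set $\tilde A(n) = \iota(A(n))$ entrywise. Then $\tilde A(n)$ is a random matrix in $\M_{n \times (n+u)}(\zp)$ with independent entries, and it reduces to $A(n)$ modulo $p$.

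First we check the balancedness hypothesis. The unique maximal ideal of $\zp$ is $p\zp$. For an entry $y$ in the $i$-th column of $A(n)$ and any residue class $r$, we have
\[
\bP(\iota(y) \equiv r \pmod{p}) = \bP(y = r) \le 1-\al_n(i).
\]
So the $i$-th column of $\tilde A(n)$ is $\al_n(i)$-balanced. Condition \eqref{eq1_condition} involves only the numbers $\al_n(i)$, so it is unchanged. Hence Theorem \ref{thm1c} applies to $\tilde A(n)$.

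Next we relate ranks to cokernels. Since $\zp$ is a PID, $\cok(\tilde A(n)) \otimes_{\zp} \fp \cong \cok(A(n))$, by right exactness of the tensor product. Write $G_n = \cok(\tilde A(n))$. By the remark before Corollary \ref{cor1b}, $\rank_{\fp}(A(n)) = n-k$ if and only if $G_n/pG_n \cong \fp^k$. If $G_n$ is infinite, then $G_n/pG_n$ is still finite, so this event is the disjoint union of $\{G_n \text{ infinite}, \ G_n/pG_n \cong \fp^k\}$ and $\{G_n \cong H\}$ for finite abelian $p$-groups $H$ with $p$-rank $k$.

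We must show the infinite case contributes nothing in the limit and that limit and sum may be exchanged. Theorem \ref{thm1c} gives pointwise convergence, and the limiting measure
\[
\mu(H) = \frac{1}{|H|^u|\Aut(H)|}\prod_{i \ge 1}(1-p^{-i-u})
\]
has total mass $1$ over all finite abelian $p$-groups, by the Cohen--Lenstra mass formula. Pointwise convergence to a probability measure on a countable set implies tightness. It also implies that $\bP(G_n \text{ infinite}) \to 0$ and that convergence holds on any set of groups (Scheff\'e's lemma). Therefore
\[
\lim_{n \to \infty} \bP(\rank_{\fp} A(n) = n-k) = \sum_{H:\ \text{$p$-rank } k} \mu(H).
\]

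This is the only step requiring care: the limit must be interchanged with an infinite sum, and the possibility of an infinite cokernel must be excluded. Total mass $1$ settles both.

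Finally, we identify the sum. The same limiting law arises from Haar-random matrices, namely Theorem \ref{thm1a} with a constant $\al$. For these matrices the reduction mod $p$ is uniform on $\M_{n \times (n+u)}(\fp)$, and the classical count of $n\times(n+u)$ matrices of rank $n-k$ over $\fp$ gives exactly the stated formula as $n \to \infty$. Since this formula is the same sum $\sum_{H:\ \text{$p$-rank } k} \mu(H)$, it equals the limit computed above. Alternatively, one may simply quote Corollary \ref{cor1b}, whose right-hand side is this same sum.
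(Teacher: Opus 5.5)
Your proposal is correct and follows the route the paper indicates: the paper only says that the $\fp$ statement follows from Theorem~\ref{thm1c} ``by reduction modulo $p$'', together with the fact that $\rank_{\fp}(A)=n-k$ if and only if $\cok(A)\cong\fp^k$. The paper leaves implicit the details you supply, namely the entrywise lift to $\zp$, the check that column balancedness and condition \eqref{eq1_condition} are preserved, the Scheff\'e/total-mass-one argument that handles the infinitely many groups of $p$-rank $k$ and rules out infinite cokernels, and the identification of the resulting sum with the stated product via the uniform case.
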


\begin{rmk} \label{rmk1e}
Assume that $\al_n(i) = \al_n$ for all $1 \le i \le n+u$. Then 
\begin{equation*}
\lim_{n \to \infty} \sum_{i=1}^{n+u} \exp \left( - \epsilon \al_n(i) n \right) = \lim_{n \to \infty} \exp \left( \log(n+u) - \epsilon \al_n n \right)=0
\end{equation*}
for every $\epsilon>0$ if and only if, for every $\Delta > 0$, we have $\al_n \ge \frac{\Delta \log n}{n}$ for all sufficiently large $n$. Hence Theorem \ref{thm1c} and Corollary \ref{cor1d} generalize Theorem \ref{thm1a} and Corollary \ref{cor1b}, respectively.
\end{rmk}

We determine the limiting distribution of $\cok(A(n))$ by computing its surjective moments. By Wood's moment theorem \cite[Theorem 3.1]{Woo19}, together with the argument of \cite[Corollary 3.4]{Woo19}, the following theorem implies Theorem \ref{thm1c}. For $\zp$-modules $M$ and $N$, we write $\Sur(M,N)$ for the set of surjective $\zp$-module homomorphisms from $M$ to $N$.

\begin{thm} \label{thm1f}
Let $u \ge 0$ be an integer and $A(n)$ be a random matrix in $\M_{n \times (n+u)}(\zp)$ defined as in Theorem \ref{thm1c}. Then for every finite abelian $p$-group $G$, 
\begin{equation*}
\lim_{n \to \infty} \bE(\#\Sur(\cok(A(n)), G)) = \frac{1}{|G|^u}.
\end{equation*}
\end{thm}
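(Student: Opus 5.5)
We follow the Wood / Nguyen--Wood moment method. Write $a = |G|$ and let $V = \zp^n$, with columns $X_1, \dots, X_{n+u}$ of $A(n)$. Every surjection $\cok(A(n)) \to G$ lifts uniquely to a surjection $F \in \Sur(V, G)$ with $F(X_i) = 0$ for all $i$. Since the columns are independent,
\begin{equation*}
\bE(\#\Sur(\cok(A(n)), G)) = \sum_{F \in \Sur(V,G)} \prod_{i=1}^{n+u} \bP(F(X_i)=0).
\end{equation*}
Each $F$ factors through $V/p^eV$, where $p^e$ kills $G$, so everything is a finite computation over $\bZ/p^e\bZ$. It then suffices to show that the sum is $|G|^{-u}+o(1)$. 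The main term is $\#\Sur(V,G)\,|G|^{-(n+u)} \to |G|^{-u}$.

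Following \cite{Woo19, NW22}, we split the $F$ by their structure.

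\emph{Codes.} Call $F$ a code of distance $\delta n$ if for every $\sigma \subset [n]$ with $|\sigma| < \delta n$, the restriction of $F$ to the coordinates outside $\sigma$ is still surjective. For a code $F$ and an $\al$-balanced column $X$, the Fourier estimate of \cite[Lemma 2.1]{Woo19} gives
\begin{equation*}
\bigl|\bP(F(X)=0) - |G|^{-1}\bigr| \le \exp(-c\,\al\,\delta n),
\end{equation*}
with $c$ depending only on $G$. Here only the column $X$ enters, so the column-dependent $\al_n(i)$ is harmless. Hence for codes
\begin{equation*}
\prod_i \bP(F(X_i)=0) = |G|^{-(n+u)} \prod_i \bigl(1 + O(|G| e^{-c\delta\al_n(i)n})\bigr),
\end{equation*}
and the product is $\exp\bigl(O(\sum_i e^{-c\delta \al_n(i) n})\bigr) \to 1$ by \eqref{eq1_condition} with $\epsilon = c\delta$. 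Since the number of codes is at most $\#\Hom(V,G) = |G|^n$, the code contribution is $|G|^{-u}+o(1)$. This is exactly where the hypothesis replaces $\al_n \gg \log n / n$: the linear-in-$n$ errors per column are summed rather than multiplied by $n$.

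\emph{Non-codes.} For non-codes we use the depth stratification of \cite{Woo19} (Lemmas 2.6--2.7 there): sort $F$ by the largest proper subgroup $H$ onto which $F$ restricted to some large coordinate set surjects, and count such $F$ by roughly $\binom{n}{\lceil \delta n\rceil}|H|^{n}|G|^{\delta n}$. For each column, the bound is $\bP(F(X_i)=0) \le |H|^{-1}\bigl(1+O(e^{-c\al_n(i)n})\bigr)$, possibly with a weaker per-column bound such as $1-\al_n(i)$ when $F$ is very degenerate. Multiplying over $i$ introduces a factor $\exp(O(\sum_i e^{-c\al_n(i)n})) = O(1)$. The remaining factor $\binom{n}{\delta n}|G|^{\delta n}(|H|/|G|)^{n}$, or $\le e^{-\Omega(n)}$ after choosing $\delta$ small in terms of $G$, sums to $o(1)$.

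\emph{Main obstacle.} The genuinely delicate part is the most degenerate $F$: those of small "robust rank," e.g.\ supported on $o(n)$ coordinates. For these, Wood uses $\prod_i(1-\al)$ with a uniform $\al$, and that bound now becomes $\prod_i (1-\al_n(i)) \le \exp(-\sum_i \al_n(i))$. So we must show that \eqref{eq1_condition} forces $\sum_i \al_n(i)$ to dominate the count $\approx n^{w}|G|^{w}$ of $F$ supported on $w$ coordinates.

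The first plan is to note that \eqref{eq1_condition} implies all but $o(1)\cdot$(any fixed power) of the indices satisfy $\al_n(i) n \ge \Delta \log n$ for each $\Delta$. More precisely, for each $\Delta$ the number of $i$ with $\al_n(i) < \Delta\log n / n$ is $o(n^{\Delta\epsilon})$, and taking $\epsilon \Delta < 1$ makes this $o(n)$. Then we restrict to the $\ge n/2$ good columns, where Wood's uniform argument applies verbatim with $\al = \Delta\log n/n$, and we bound the probabilities for the bad columns trivially by $1$. The cost is replacing $n+u$ by roughly $n - o(n)$ good columns, which is absorbed into the exponential savings in each stratum. If it is not absorbed in the finest strata, we would refine by running the argument with $w$-dependent thresholds.
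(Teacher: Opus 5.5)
Your code estimate is correct and coincides with Lemma \ref{lem2c}. The gap is in the non-codes.

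\textbf{The depth-stratification step does not close.} The $H$-stratum has about $\binom{n}{\delta n}|G|^{\delta n}|H|^{n}$ maps. The per-column bound $|H|^{-1}(1+O(e^{-c\al_n(i)n}))$ over all $n+u$ columns contributes $|H|^{-n-u}$. So the product is about $\binom{n}{\delta n}|G|^{\delta n}|H|^{-u}$, which is exponentially \emph{large}; there is no factor $(|H|/|G|)^n$. All the saving has to come from the coordinates $i$ with $Fv_i\notin H$.

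Wood's extra factor $1-\al$ per column yields only $\exp(-\sum_i\al_n(i))$. Under \eqref{eq1_condition} this can be $e^{-o(n)}$, e.g.\ with $\al_n(i)=(\log n)^2/n$ for all $i$, so it cannot beat a count that is exponential in $n$. This is exactly why Wood's argument needs constant $\al$, and it does \emph{not} apply ``verbatim with $\al=\Delta\log n/n$.''

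The missing ingredient is the Nguyen--Wood robustness refinement. You count maps by the numbers $k_j$ of coordinates in each layer $G_{j-1}\setminus G_j$ (Lemma \ref{lem2e}). You then use the per-column factors $p_j^{-1}+\frac{p_j-1}{p_j}e^{-\al k_j/p_j^2}$ of Lemma \ref{lem2f}, whose saving grows with $k_j$. Finally you need a summation estimate for $\sum_{1\le m\le \delta n}\binom{n}{m}C^m\prod_{i=1}^{n+u}\bigl(q^{-1}+\frac{q-1}{q}e^{-\al_n(i)m/q^2}\bigr)$. The paper proves this in Lemma \ref{lem2h} via Jensen and AM--GM.

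\textbf{The fallback also fails.} You keep only the good columns and bound $\bP(FX_i=0)$ by $1$ on the $b$ bad columns. In the $H$-stratum, the $|H|^n$ maps are compensated only by a factor $|H|^{-1}$ from \emph{every} column. So each discarded column costs a factor $|H|$, i.e.\ $|H|^{b}$ in total for $H\neq 0$; in effect you replace $u$ by $u-b$.

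Condition \eqref{eq1_condition} only gives $b=n^{o(1)}$; for instance $b=\lf e^{\sqrt{\log n}}\rf$ is allowed, as in Example \ref{exmp2i}. Meanwhile the saving in the thinnest stratum ($k_1=1$, all other $k_j=0$), computed with $\al=\Delta\log n/n$, is only about $n^{1-c\Delta}$, which is polynomial. So $|H|^{b}$ is not absorbed, and ``$w$-dependent thresholds'' does not fix this. The loss is an artifact of the trivial bound: the true per-column probability in that stratum is about $|H|^{-1}(1-\al_n(i))$.

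\textbf{How the split could be repaired.} Keep the factor $|H|^{-1}+e^{-\al_n(i)\delta n/a^2}$ from Lemma \ref{lem2f} for \emph{all} columns. Against $|H|^n$ its product is $|H|^{-u}(1+o(1))$, directly by \eqref{eq1_condition}. Drop only the layer factors, which are $\le 1$, on the bad columns. On the $n-n^{o(1)}$ good columns these factors are at most their values at $\al=\Delta\log n/n$. The homogeneous Nguyen--Wood summation then goes through once $\Delta$ is large in terms of $G$. That would be a valid alternative to Lemma \ref{lem2h}, but it is not the argument you wrote.
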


It would be desirable to extend the columnwise inhomogeneous random matrix model of our main theorem to an entrywise inhomogeneous model. More precisely, we may allow the $(r,s)$-entry to have its own balancedness parameter $\al_n(r,s)$. In such a model, a criterion for universality should depend not only on the multiset of these parameters, but also on their positions in the matrix. 

For example, if the first row is identically zero, then the cokernel is always infinite. On the other hand, \cite[Proposition 2.3]{KLY24} shows that when $u=0$ and the upper-left $a_n \times b_n$ block is identically zero, while all remaining entries are independent and Haar-random, then the cokernel converges to the Cohen--Lenstra distribution if and only if $\lim_{n \to \infty} (n-a_n-b_n) = \infty$. Taking $a_n=b_n=\lf \sqrt{2n} \rf$, we have $\lim_{n \to \infty} (n-a_n-b_n) = \infty$ and $a_n b_n \ge n$ for all $n \ge 1$. 
These examples demonstrate that the positions of the balancedness parameters play an essential role in any entrywise universality criterion.

We briefly outline the proof of Theorem \ref{thm1f}. The proof follows the moment computation in \cite[Section 4]{NW22}, but some modifications are needed in our setting. The main difference is that the columns of $A(n)$ are inhomogeneously balanced, so the moment estimates have to keep track of the varying balancedness parameters $\al_n(i)$. Lemma \ref{lem2c} handles the contribution of codes under the condition \eqref{eq1_condition}, while Lemma \ref{lem2h} provides the summation estimate needed to control the contribution of non-codes. These two lemmas are the main new ingredients in adapting the Nguyen--Wood argument to the present setting.

\section{Computation of the moments} \label{Sec2}

Throughout this section, let $a=p^d$ for a positive integer $d$, let $R=\bZ/a\bZ$ and fix $u\ge 0$. For each positive integer $n$, let $V=R^n$ and $W=R^{n+u}$. We write $v_1,\ldots,v_n$ and $w_1,\ldots,w_{n+u}$ for the standard bases of $V$ and $W$, respectively. If $\sigma\subset [n]$, let $V_{\backslash\sigma}$
denote the submodule of $V$ generated by the $v_i$ with $i \notin \sigma$.
For each positive integer $n$, let $\al_n(1), \ldots, \al_n(n+u) \in (0, 1/2]$. Assume that this numbers satisfy the condition \eqref{eq1_condition} for every $\epsilon>0$. Let $A(n)$ be a random matrix in $\M_{n\times(n+u)}(R)$ with independent entries such that the $i$-th column is $\al_n(i)$-balanced. Let $G$ be a finite abelian $p$-group such that $p^d G=0$. If we regard $A(n)$ as a linear map from $W$ to $V$, then 
\begin{equation*}
\begin{split}
\bE(\# \Sur(\cok(A(n)), G)) &=\sum_{F\in \Sur(V, G)} \bP(FA(n)=0) \\
&=\sum_{F\in \Sur(V, G)} \prod_{i=1}^{n+u}\bP(FA_i(n)=0),
\end{split}
\end{equation*}
where $A_i(n)$ denotes the $i$-th column of $A(n)$. More generally, for every $B\in\Hom(W,G)$ and $B_i=Bw_i \in G$, we have 
\begin{equation}\label{eq:column-factorization-sec2}
\bP(FA(n)=B)=\prod_{i=1}^{n+u}\bP(FA_i(n)=B_i).
\end{equation}

\begin{defn}\label{def2a}
Let $\omega>0$. We say that $F\in \Hom(V,G)$ is a code of distance $\omega$ if for every $\sigma \subset [n]$ with $|\sigma|<\omega$, we have $FV_{\backslash \sigma}=G$. Equivalently, $F$ remains surjective after deleting any set of fewer than $\omega$ standard basis vectors of $V$.
\end{defn}

To estimate probabilities for a code $F$, we first recall the following lemma that gives an estimate for the probability associated with a single column.

\begin{lem}\label{lem2b}
(\cite[Lemma 2.1]{Woo19}) Let $X\in R^n$ be a random vector with independent $\al$-balanced entries. Let $F\in\Hom(V,G)$ be a code of distance $\omega$. Then for every $B \in G$,
\begin{equation*}
\lt \bP(FX=B)-|G|^{-1} \rt \le \exp(-\al\omega/a^2).
\end{equation*}
\end{lem}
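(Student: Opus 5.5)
We plan to follow the Fourier-analytic argument of \cite[Lemma 2.1]{Woo19}. Write $X=(x_1,\ldots,x_n)$ and, for $j\in[n]$, let $g_j=Fv_j\in G$, so that $FX=\sum_{j} x_j g_j$. Characters of $G$ take values in $a$-th roots of unity because $p^dG=0$. Orthogonality of characters gives
\begin{equation*}
\bP(FX=B)=\frac{1}{|G|}\sum_{\chi\in \widehat{G}} \overline{\chi(B)}\,\bE(\chi(FX)) = \frac{1}{|G|}\sum_{\chi} \overline{\chi(B)}\prod_{j=1}^n \bE\bigl(\chi(g_j)^{x_j}\bigr),
\end{equation*}
where the factorization uses independence of the entries. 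The term $\chi=1$ contributes exactly $|G|^{-1}$. Hence
\begin{equation*}
\lt \bP(FX=B)-|G|^{-1}\rt \le \max_{\chi\ne 1}\prod_{j=1}^n \lt\bE(\chi(g_j)^{x_j})\rt,
\end{equation*}
and it suffices to bound each such product by $\exp(-\al\omega/a^2)$.

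The code hypothesis enters as follows. Fix a nontrivial character $\chi$. Let $\sigma=\{j:\chi(g_j)=1\}$. Then $FV_{\backslash\sigma}$ is generated by the $g_j$ with $j\notin\sigma$. Since $\chi$ is trivial on $FV_{\backslash \sigma}$ but nontrivial on $G$, the subgroup $FV_{\backslash\sigma}$ is proper. Applying the code property to $[n]\setminus\sigma$ shows that $|[n]\setminus \sigma|\ge\omega$. So at least $\omega$ indices $j$ have $\zeta_j:=\chi(g_j)\neq1$, and each such $\zeta_j$ is a nontrivial $a$-th root of unity.

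The main step is a single-coordinate estimate: for $\zeta\ne1$ an $a$-th root of unity and $x$ an $\al$-balanced variable in $R$,
\begin{equation*}
\lt\bE(\zeta^{x})\rt\le 1-\frac{\al}{a^2}\le \exp(-\al/a^2).
\end{equation*}
The only maximal ideal of $R=\bZ/a\bZ$ is $(p)$, so balancedness says that no residue class mod $p$ has probability exceeding $1-\al$. We plan to use the elementary estimate
\begin{equation*}
|\bE\zeta^x|^2=\sum_{r,s}\bP(x=r)\bP(x=s)\cos\bigl(2\pi k(r-s)/a\bigr),
\end{equation*}
where $\zeta=e^{2\pi i k/a}$. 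It reduces to exhibiting pairs $r,s$ with $\zeta^{r-s}\ne1$ whose total mass is at least of order $\al$. Let $\zeta$ have exact order $p^e$ with $e\ge1$. Then $\zeta^{r-s}\ne1$ whenever $r\not\equiv s \pmod p$. The mass of pairs with $r\not\equiv s\pmod p$ is at least $\al(1-\al)\ge\al/2$, since the maximal residue class has mass at most $1-\al$. For each such pair, $1-\cos(2\pi m/a)\ge 1-\cos(2\pi/a)\ge 8/a^2$. This yields $|\bE\zeta^x|^2\le 1-4\al/a^2$ and hence $|\bE\zeta^x|\le 1-2\al/a^2$, which is stronger than needed. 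Checking these constants carefully is the only delicate point.

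Multiplying over the at least $\omega$ indices with $\zeta_j\ne1$, and bounding the remaining factors by $1$, gives $\prod_j|\bE(\zeta_j^{x_j})|\le\exp(-\al\omega/a^2)$. This completes the proof.
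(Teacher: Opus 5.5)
Your proof is correct and is essentially the Fourier-analytic argument behind \cite[Lemma 2.1]{Woo19}, which the paper cites rather than reproves: you expand $\bP(FX=B)$ over the characters of $G$, use the code property to force at least $\omega$ coordinates with $\chi(Fv_j)\ne 1$ for each nontrivial $\chi$, and bound each such factor using balancedness modulo $p$, where your constants check out and even give the stronger bound $1-2\al/a^2$. One indexing slip needs fixing: $\chi$ is trivial on the span of the $g_j$ with $j\in\sigma$, which is $FV_{\backslash([n]\setminus\sigma)}$ and not $FV_{\backslash\sigma}$; with that relabeling, applying the code property to $[n]\setminus\sigma$ gives $|[n]\setminus\sigma|\ge\omega$, exactly as you conclude.
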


\begin{lem}\label{lem2c}
Fix $\delta>0$. If $F\in\Hom(V,G)$ is a code
of distance $\delta n$, then for all $B\in\Hom(W,G)$,
\begin{equation*}
\bP(FA(n)=B) = |G|^{-n-u}(1+o(1)).
\end{equation*}
The $o(1)$ term is uniform in the choice of the code $F$ and $B\in\Hom(W,G)$.
\end{lem}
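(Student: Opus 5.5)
We combine the column factorization \eqref{eq:column-factorization-sec2} with Lemma \ref{lem2b} applied to each column separately. Each column $A_i(n)$ has independent $\al_n(i)$-balanced entries, and $F$ is a code of distance $\delta n$. So Lemma \ref{lem2b} gives, for every $i$ and every $B_i \in G$,
\begin{equation*}
\bP(FA_i(n)=B_i) = |G|^{-1}(1+\eta_i), \qquad |\eta_i| \le |G| \exp\left(-\al_n(i)\delta n/a^2\right).
\end{equation*}
These bounds depend only on $\al_n(i)$, $\delta$, $a$ and $|G|$. In particular they are independent of $F$ and of $B$, which is where the uniformity will come from. Multiplying over $i$ yields
\begin{equation*}
\bP(FA(n)=B) = |G|^{-n-u}\prod_{i=1}^{n+u}(1+\eta_i),
\end{equation*}
so it remains to show that $\prod_i(1+\eta_i)\to 1$ uniformly.

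Set $\epsilon=\delta/a^2>0$. This is a fixed constant, since $a$ and $\delta$ are fixed. Put
\begin{equation*}
S_n=\sum_{i=1}^{n+u}\exp(-\epsilon\al_n(i)n).
\end{equation*}
Condition \eqref{eq1_condition} gives $S_n\to 0$. Then $\sum_i|\eta_i|\le |G|S_n\to 0$, and in particular $\max_i|\eta_i|\le |G|S_n$. Hence for large $n$ every $|\eta_i|\le 1/2$.

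For $|x|\le 1/2$ we have $|\log(1+x)|\le 2|x|$. Therefore
\begin{equation*}
\left|\log\prod_i(1+\eta_i)\right|\le 2|G|S_n\to 0,
\end{equation*}
which gives $\prod_i(1+\eta_i)=1+O(S_n)=1+o(1)$. The implied bound $e^{2|G|S_n}-1$ depends only on $n$, not on $F$ or $B$, so the $o(1)$ term is uniform as claimed.

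The only point needing care is that the individual errors $\eta_i$ can be large when some $\al_n(i)$ are small. This is why the argument works with the summed error $\sum_i|\eta_i|$ rather than a uniform per-column bound. The smallness of the whole sum, guaranteed exactly by \eqref{eq1_condition} with $\epsilon=\delta/a^2$, controls both the maximum of the $|\eta_i|$ and the product.
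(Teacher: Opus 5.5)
Your proof is correct and follows essentially the same route as the paper: factor over columns, apply Lemma~\ref{lem2b} to each column with $\omega=\delta n$, and use condition \eqref{eq1_condition} with $\epsilon=\delta/a^2$ so the summed per-column errors vanish uniformly in $F$ and $B$. The only difference is cosmetic: you control $\prod_i(1+\eta_i)$ via $|\log(1+x)|\le 2|x|$, whereas the paper sandwiches the product between $\prod_i(1\pm x_i)$, using $1+x\le e^x$ for the upper bound and $\bigl|\prod_i(1-x_i)-1\bigr|\le\prod_i(1+x_i)-1$ for the lower bound.
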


\begin{proof}
Taking $\epsilon = \frac{\delta}{a^2} > 0$ in the condition \eqref{eq1_condition}, we have $\sum_{i=1}^{n+u}\exp(-\al_n(i)\delta n/a^2) \to 0$ as $n \to \infty$. Assume that $n$ is sufficiently large so that $|G|^{-1} > \exp(-\al_n(i)\delta n/a^2)$ for all $i$. By \eqref{eq:column-factorization-sec2} and Lemma \ref{lem2b}, we have
\begin{equation*}
\prod_{i=1}^{n+u}\left(|G|^{-1}-\exp(-\al_n(i)\delta n/a^2) \right)
\le \bP(FA(n)=B) 
\le \prod_{i=1}^{n+u}\left(|G|^{-1}+\exp(-\al_n(i)\delta n/a^2) \right).
\end{equation*}
After multiplying by $|G|^{n+u}$, this becomes
\begin{equation} \label{eq23a}
\prod_{i=1}^{n+u}\left(1-|G|\exp(-\al_n(i)\delta n/a^2) \right)
\le \frac{\bP(FA(n)=B)}{|G|^{-n-u}}
\le \prod_{i=1}^{n+u}\left(1+|G|\exp(-\al_n(i)\delta n/a^2) \right).
\end{equation}
The right-hand side of \eqref{eq23a} is bounded above by
\begin{equation*}
\prod_{i=1}^{n+u} \exp\left(|G|\exp(-\al_n(i)\delta n/a^2) \right) = \exp\left(|G|\sum_{i=1}^{n+u}\exp(-\al_n(i)\delta n/a^2) \right), 
\end{equation*}
which converges to $1$ as $n \to \infty$. On the other hand, setting $x_i=|G|e^{-\al_n(i)\delta n/a^2}>0$, we have
\begin{equation*}
\lt \prod_{i=1}^{n+u}(1-x_i)-1 \rt 
\le \prod_{i=1}^{n+u}(1+|x_i|)-1 
= \prod_{i=1}^{n+u}(1+x_i)-1
\end{equation*}
so the left-hand side of \eqref{eq23a} also converges to $1$ as $n \to \infty$.
\end{proof}

We next consider the case where $F$ is not a code. To control this case, we further divide the non-code case into finer subcases. We recall the notion of robustness and some lemmas from \cite{NW22}. For an integer $D$ with a prime factorization $\prod_i p_i^{e_i}$, we define $\ell(D):=\sum_i e_i$.

\begin{defn}\label{def2d}
(\cite[Definition 4.8]{NW22}) Given $\delta>0$, we say that $F\in \Hom(V,G)$ is \textit{$\delta$-robust} for a subgroup $H$ of $G$ if $H$ is minimal such that
\begin{equation*}
\#\{i\in[n] : Fv_i\notin H\}\le \ell([G:H])\delta n.
\end{equation*}
\end{defn}

\begin{lem} \label{lem2e}
(\cite[Lemma 4.10]{NW22}) Let $\delta>0$, $H$ be a subgroup of $G$ of index $D>1$ and $H=G_{\ell(D)}\subset\cdots\subset G_{2}\subset G_{1}\subset G_{0}=G$ be a maximal chain of proper subgroups. Let $p_{j}=|G_{j-1}/G_{j}|$. The number of maps $F\in\Hom(V,G)$ that are $\delta$-robust for $H$ and satisfy $\# \{ i : Fv_{i}\in G_{j-1}\backslash G_{j} \} = k_j$ for each $1\le j\le \ell(D)$ is at most
\begin{equation*}
|H|^{n-\sum_{j}k_{j}}\prod_{j=1}^{\ell(D)}\binom{n}{k_{j}}|G_{j-1}|^{k_{j}}.
\end{equation*}
\end{lem}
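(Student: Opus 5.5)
This is a pure counting statement; the random matrix plays no role. The plan is to build every $F$ that is $\delta$-robust for $H$ and satisfies the prescribed counts $k_j$ in two stages: first place the standard basis vectors into the layers $G_{j-1}\setminus G_j$, then choose the images $Fv_i$.

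\textbf{Stage 1: layer assignment.} Partition $[n]$ as follows. For each $j$ with $1\le j\le \ell(D)$, let $S_j=\{i : Fv_i\in G_{j-1}\setminus G_j\}$, so $|S_j|=k_j$. Let $S_0=\{i : Fv_i\in H\}$. Since $G_0=G$ and $G_{\ell(D)}=H$, the sets $G_{j-1}\setminus G_j$ for $1\le j\le \ell(D)$ together with $H$ partition $G$. Hence $S_0,S_1,\dots,S_{\ell(D)}$ partition $[n]$ and $|S_0|=n-\sum_j k_j$. Each $S_j$ is a $k_j$-subset of $[n]$. Bounding the number of choices of each $S_j$ independently by $\binom{n}{k_j}$ overcounts, since disjointness is ignored, but it gives the upper bound $\prod_j \binom{n}{k_j}$ for the number of admissible tuples $(S_1,\dots,S_{\ell(D)})$. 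The set $S_0$ is then determined.

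\textbf{Stage 2: choice of images.} With the sets fixed, $F$ is determined by the images $Fv_i$, since $v_1,\dots,v_n$ is a basis of the free $R$-module $V$ and any assignment extends to a homomorphism. For $i\in S_j$ with $j\ge1$ we need $Fv_i\in G_{j-1}\setminus G_j\subset G_{j-1}$, which gives at most $|G_{j-1}|$ choices. For $i\in S_0$ we need $Fv_i\in H$, which gives $|H|$ choices. Multiplying, the number of $F$ with these prescribed sets is at most
\begin{equation*}
|H|^{n-\sum_j k_j}\prod_{j=1}^{\ell(D)}|G_{j-1}|^{k_j}.
\end{equation*}

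\textbf{Conclusion.} Summing this bound over the at most $\prod_j\binom{n}{k_j}$ tuples gives the claimed bound. The $\delta$-robustness hypothesis is used only to restrict the set of $F$ being counted, so dropping it can only increase the count; in particular the bound holds for all $F\in\Hom(V,G)$ with the given $k_j$. There is no real obstacle here. The only points to check are that the layers together with $H$ genuinely partition $G$, which holds because the chain runs from $G_0=G$ down to $G_{\ell(D)}=H$, and that $F$ is freely determined by the images of the basis.
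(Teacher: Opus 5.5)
Your proof is correct and is essentially the same argument as the cited Lemma 4.10 of Nguyen--Wood (the paper quotes it without proof): choose which basis vectors land in each layer $G_{j-1}\setminus G_j$, in at most $\prod_j\binom{n}{k_j}$ ways, then allow at most $|G_{j-1}|$ images for each of those vectors and $|H|$ images for each remaining one. You are also right that $\delta$-robustness only restricts the set being counted, so it is not needed for the upper bound.
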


We have replaced $w_j$ in \cite{NW22} by $k_j$ to avoid confusion with the standard basis $w_1, \ldots, w_{n+u}$ of $W$.

\begin{lem} \label{lem2f}
(\cite[Lemma 4.11]{NW22}) Let $\delta$, $F$, $H$, $D$, $H=G_{\ell(D)}\subset \cdots\subset G_{2}\subset G_{1}\subset G_{0}=G$ and $p_j$ be as in Lemma \ref{lem2e}. (In particular, $F$ is $\delta$-robust.) For $1\le j\le \ell(D)$, let $k_{j}$ be the number of $i\in [n]$ such that $Fv_{i}\in G_{j-1}\backslash G_{j}$. Let $X \in R^{n}$ be an $\al$-balanced random vector. Then
\begin{equation*}
\bP(FX=0) \le \left( D|G|^{-1}+\exp(-\al\delta n/a^{2}) \right) \prod_{j=1}^{\ell(D)} \left( p_{j}^{-1}+\frac{p_{j}-1}{p_{j}}\exp(-\al k_{j}/p_{j}^{2}) \right).
\end{equation*}
\end{lem}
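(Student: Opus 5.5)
We follow the proof of \cite[Lemma 4.11]{NW22}. Since the entries of $X=(x_1,\ldots,x_n)$ are independent, we expose them in blocks according to the chain $G=G_0\supset G_1\supset\cdots\supset G_{\ell(D)}=H$. Let $S_j=\{i: Fv_i\in G_{j-1}\setminus G_j\}$ for $1\le j\le \ell(D)$, so $|S_j|=k_j$. Let $T=\{i: Fv_i\in H\}$ and $U=[n]\setminus(T\cup\bigcup_j S_j)$, the indices with $Fv_i\notin G_0$; $U$ is empty. We write
\begin{equation*}
FX=\sum_{i\in T}x_iFv_i+\sum_{j=1}^{\ell(D)}\sum_{i\in S_j}x_iFv_i .
\end{equation*}
The event $FX=0$ forces a sequence of conditions, from the top of the chain down. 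Conditioning on $\{x_i : i\in T\}$ and on $\{x_i: i\in S_{j'},\ j'>j\}$, the contribution from those indices lies in $G_j$. Hence reducing modulo $G_j$ forces $\sum_{i\in S_{j'},\,j'\le j}x_iFv_i\equiv 0$ modulo $G_j$.

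The key steps are as follows.

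First, for each $j$, condition on all $x_i$ with $i\notin S_j$. In the group $G_{j-1}/G_j$, which is cyclic of prime order $p_j$, every $Fv_i$ with $i\in S_j$ is a nonzero element. The contributions from $S_{j'}$ with $j'<j$ lie outside $G_{j-1}$ only modulo $G_{j'}$, so they must be handled by ordering. We therefore process $j=1,2,\ldots$ in order: given that the sum so far is $0$ modulo $G_{j-1}$, the constraint at level $j$ is a linear condition modulo $p_j$ on $(x_i)_{i\in S_j}$ with all coefficients nonzero. The standard Fourier bound for $\al$-balanced entries gives, for any target $c\in\bZ/p_j$,
\begin{equation*}
\bP\Big(\sum_{i\in S_j} x_i c_i\equiv c\Big)\le p_j^{-1}+\frac{p_j-1}{p_j}\exp(-\al k_j/p_j^2),
\end{equation*}
since each nontrivial character has $|\bE\,\zeta^{x_ic_i t}|\le \exp(-\al/p_j^2)$. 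This is the same computation as in the proof of \cite[Lemma 2.1]{Woo19}.

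Second, condition on all $x_i$ with $i\notin T$ and require $\sum_{i\in T}x_iFv_i$ to equal a prescribed element of $H$. By minimality of $H$ in the $\delta$-robustness, $F$ restricted to $T$, viewed as a map $R^T\to H$, is a code of distance $\delta n$ in $H$. The argument of \cite[Lemma 4.11]{NW22} shows this: if deleting fewer than $\delta n$ indices of $T$ left the image in a proper subgroup $H'\subset H$, then $H'$ would satisfy the robustness inequality with $\ell([G:H'])\ge \ell(D)+1$, contradicting minimality. Lemma \ref{lem2b} applied in $H$ then gives probability at most $|H|^{-1}+\exp(-\al\delta n/a^2)=D|G|^{-1}+\exp(-\al\delta n/a^2)$.

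Third, multiply the conditional bounds via the tower property to obtain the stated product.

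The main obstacle is the code property of $F|_T$ into $H$ from minimality. We must check that the counting $\#\{i: Fv_i\notin H'\}\le \ell([G:H'])\delta n$ indeed holds, using $\#\{i:Fv_i\notin H\}\le\ell(D)\delta n$ plus fewer than $\delta n$ deleted indices. We also need the correct direction of the minimality definition, and we must ensure the conditioning order keeps independence of the exposed blocks.
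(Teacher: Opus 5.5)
Your proposal is correct and is essentially the argument of \cite[Lemma 4.11]{NW22}, which the paper cites without reproducing. You split off the coordinates with $Fv_i\notin H$ and bound the probability that their contribution lands in $H$ by exposing the blocks $S_1,S_2,\ldots$ in order, applying the Fourier bound in $G_{j-1}/G_j\cong\bZ/p_j\bZ$ uniformly in the previously determined target. You then use minimality of $H$ to see that $F$ restricted to $T$ is a code of distance $\delta n$ into $H$, so Lemma \ref{lem2b} gives the factor $|H|^{-1}+\exp(-\al\delta n/a^2)=D|G|^{-1}+\exp(-\al\delta n/a^2)$. The counting check you flag is immediate: deleting fewer than $\delta n$ indices of $T$ gives $\#\{i:Fv_i\notin H'\}<\ell(D)\delta n+\delta n\le\ell([G:H'])\delta n$ for any $H'\subsetneq H$.
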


We now prove two additional lemmas that will be used in the proof of Theorem \ref{thm1f}. 

\begin{lem} \label{lem2g}
Let $C\ge 1$ and $f>0$. If $\delta>0$ is sufficiently small in terms of $C$ and $f$, then for all sufficiently large $n$,
\begin{equation*}
\sum_{m=1}^{\lf \delta n\rf}\binom{n}{m}C^m\le \exp(fn).
\end{equation*}
\end{lem}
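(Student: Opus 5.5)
The plan is to bound the whole sum by a constant times its largest term and then use the standard entropy estimate for binomial coefficients. Take $\delta \le 1/2$ from the outset. For $1 \le m \le \lf \delta n \rf \le n/2$, the ratio of consecutive terms is
\begin{equation*}
\frac{\binom{n}{m+1}C^{m+1}}{\binom{n}{m}C^m} = \frac{(n-m)C}{m+1} \ge \frac{C}{2} \cdot \frac{n}{m+1},
\end{equation*}
so the terms are nondecreasing in $m$ as soon as $C(n-m) \ge m+1$. This holds throughout the range once $\delta$ is small and $n$ is large, because $C \ge 1$. Consequently
\begin{equation*}
\sum_{m=1}^{\lf \delta n\rf}\binom{n}{m}C^m \le \delta n \binom{n}{\lf \delta n \rf} C^{\delta n}.
\end{equation*}

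Next, apply the elementary bound $\binom{n}{m} \le (en/m)^m$. The function $m \mapsto (en/m)^m$ is increasing for $m \le n$, so for $m = \lf \delta n \rf$ it is at most $(e/\delta)^{\delta n}$. Alternatively, one can use $\binom{n}{m} \le \exp(n H(m/n))$ with $H$ the natural-log binary entropy, which is increasing on $[0,1/2]$. Either way,
\begin{equation*}
\sum_{m=1}^{\lf \delta n\rf}\binom{n}{m}C^m \le \delta n \exp\bigl( \delta n (1 + \log(1/\delta) + \log C) \bigr).
\end{equation*}
In fact the monotonicity step is not really needed: bounding each term directly by $(eCn/m)^m \le (eC/\delta)^{\delta n}$ gives the same estimate without it.

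Finally, choose the parameter. The quantity $g(\delta) = \delta(1 + \log(1/\delta) + \log C)$ tends to $0$ as $\delta \to 0^+$, since $\delta \log(1/\delta) \to 0$. So choose $\delta$ small, depending on $C$ and $f$, such that $g(\delta) \le f/2$. Then the sum is at most $\delta n \, e^{fn/2}$, and this is $\le e^{fn}$ as soon as $\delta n \le e^{fn/2}$, which holds for all sufficiently large $n$.

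The only place needing care is the uniform bound on each binomial term, that is, getting $(eC/\delta)^{\delta n}$ for every $m \le \delta n$. This is the main point of the argument, but it is routine once the bound $\binom{n}{m} \le (en/m)^m$ is combined with the monotonicity of $x \mapsto (eCn/x)^x$ on $(0, Cn]$.
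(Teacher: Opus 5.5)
Your proof is correct and follows essentially the same route as the paper: you bound the sum by the number of terms times the largest term $\binom{n}{\lf \delta n \rf}C^{\lf \delta n \rf}$ (valid since $\delta \le 1/2$ and $C \ge 1$), use $\binom{n}{m} \le (en/m)^m$ to reach $(eC/\delta)^{\delta n}$, and then choose $\delta$ so that $\delta \log(eC/\delta)$ is small enough to absorb the polynomial prefactor into $\exp(fn)$. Your remark that the termwise bound $(eCn/m)^m \le (eC/\delta)^{\delta n}$ makes the monotonicity step unnecessary is also valid.
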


\begin{proof}
Assume that $\delta < 1/2$. Then
\begin{equation*}
\sum_{m=1}^{\lf \delta n\rf}\binom{n}{m}C^m
\le (\lf \delta n\rf+1) \binom{n}{\lf \delta n\rf} C^{\lf \delta n\rf}
\le (\delta n+1)\left(\frac{e C}{\delta}\right)^{\delta n}.
\end{equation*}
Since $\delta\log(eC/\delta)\to 0$ as $\delta\to0$, we may choose
$\delta$ sufficiently small so that
\begin{equation*}
\log(\delta n+1)+\delta n\log(eC/\delta)\le fn
\end{equation*}
for all sufficiently large $n$. This proves the claim.
\end{proof}

\begin{lem}\label{lem2h}
Assume that the condition \eqref{eq1_condition} holds. Let $q$ be a prime and $C \ge 1$. If $\delta>0$ is sufficiently small in terms of $q$ and $C$, then
\begin{equation} \label{eq2_layer}
\lim_{n \to \infty} \sum_{m=1}^{\lf \delta n\rf} \binom{n}{m}C^m \prod_{i=1}^{n+u} \left(q^{-1}+\frac{q-1}{q}\exp(-\al_n(i)m/q^2)\right) = 0.
\end{equation}
\end{lem}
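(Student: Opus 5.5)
The plan is to bound each factor of the product by an exponential, and to use condition \eqref{eq1_condition} only to guarantee that almost all columns have $\al_n(i)n$ at least a large multiple of $\log n$. Then I split the range of $m$ into a ``small $m$'' part, where the product beats $\binom{n}{m}C^m$ term by term, and a ``large $m$'' part, handled by Lemma \ref{lem2g}.

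\emph{Step 1 (single factor).} Write $x=\al_n(i)m/q^2$ and $c=(1-q^{-1})(1-e^{-1})\in(0,1)$. Then
\begin{equation*}
q^{-1}+\tfrac{q-1}{q}e^{-x}=1-(1-q^{-1})(1-e^{-x}).
\end{equation*}
Since $1-e^{-x}\ge (1-e^{-1})\min(x,1)$, each factor is at most $\exp(-c\min(x,1))$. In particular, every factor is at most $1$. When $x\ge 1$ the factor is at most $\rho:=1-c<1$, and when $x<1$ it is at most $\exp(-cx)$.

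\emph{Step 2 (most columns are well balanced).} Fix $\Delta>0$ and apply \eqref{eq1_condition} with $\epsilon=1/(2\Delta)$. Each index with $\al_n(i)n<\Delta\log n$ contributes at least $n^{-1/2}$ to the sum, and the sum is $o(1)$. Hence the bad set $S_n=\{i:\al_n(i)n<\Delta\log n\}$ satisfies $|S_n|=o(\sqrt n)$. So for $n$ large, at least $n/2$ indices $i$ have $\al_n(i)\ge \Delta\log n/n$. I discard the factors for $i\in S_n$, which is allowed because each factor is at most $1$.

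\emph{Step 3 (splitting the sum).} Set $m_0=q^2n/(\Delta\log n)$.
\begin{itemize}[leftmargin=*]
\item For $m\le m_0$, every good $i$ has $x\ge \Delta m\log n/(q^2n)$. Whether $x<1$ or $x\ge 1$, this gives $\min(x,1)\ge \Delta m\log n/(q^2 n)$, since $m\le m_0$. The product is therefore at most $\exp(-c(n/2)\Delta m\log n/(q^2n))=n^{-c\Delta m/(2q^2)}$. Choose $\Delta$ (depending only on $q,C$) with $c\Delta/(2q^2)\ge 3$. Using $\binom nm C^m\le (Cn)^m$, the $m$-th term is at most $(C n^{-2})^m$, so this part of the sum is $O(n^{-2})\to0$.
\item For $m_0<m\le\delta n$, every good $i$ has $x\ge1$, so the product is at most $\rho^{n/2}=\exp(-\tfrac{n}{2}\log(1/\rho))$. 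Apply Lemma \ref{lem2g} with $f=\tfrac14\log(1/\rho)$, which depends only on $q$. For $\delta$ small in terms of $q$ and $C$ this gives $\sum_{m\le\delta n}\binom nm C^m\le e^{fn}$, so this part is at most $e^{-fn}\to0$.
\end{itemize}

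The main obstacle is Step 2: extracting usable information from \eqref{eq1_condition}, which a priori says nothing about individual columns. The key is to choose $\epsilon$ depending on $\Delta$, so that the number of columns with $\al_n(i)n<\Delta\log n$ is $o(\sqrt n)$. With that in hand, the remaining estimates are the homogeneous Nguyen--Wood computation applied to $n/2$ columns. One should also check that $\delta$ depends only on $q$ and $C$ (through $\rho$ and Lemma \ref{lem2g}) and not on $\Delta$; this holds because $\Delta$ only affects the threshold $m_0$, not $\delta$.
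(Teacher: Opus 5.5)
Your proof is correct, but it takes a different route from the paper's.

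The paper keeps all $n+u$ columns and handles the inhomogeneity by convexity:
\begin{itemize}
\item it applies \eqref{eq1_condition} with $\epsilon=\delta/q^2$, so $\epsilon$ is tied to $\delta$, to get $\sum_i x_i^{\delta n}\le 1$ for $x_i=\exp(-\alpha_n(i)/q^2)$;
\item it uses Jensen's (power mean) inequality to deduce $\frac{1}{n+u}\sum_i x_i^m\le (n+u)^{-m/(\delta n)}$ for $m\le\delta n$;
\item it uses AM--GM to bound the whole product by $\exp(-\gamma(1-e^{-t})n)$ with $t=m\log n/(\delta n)$, and then splits at $t=1$.
\end{itemize}
In effect, every column behaves on average like a homogeneous column with $\Delta=q^2/\delta$. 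As a result, the small-$m$ range imposes a second smallness condition on $\delta$ (namely $\gamma c_0/\delta>2$), in addition to the one coming from Lemma \ref{lem2g}.

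You instead use \eqref{eq1_condition} with $\epsilon=1/(2\Delta)$ as a counting device. This shows that only $o(\sqrt n)$ columns fail to satisfy $\alpha_n(i)\ge\Delta\log n/n$. You discard those columns via the trivial bound $\le 1$ and run the homogeneous computation on the remaining $\ge n/2$ columns, splitting at $m_0=q^2n/(\Delta\log n)$.

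What your route buys:
\begin{itemize}
\item It decouples $\Delta$, which is fixed in terms of $q$ alone, from $\delta$, which enters only through Lemma \ref{lem2g}.
\item It avoids Jensen and AM--GM.
\item It makes explicit that \eqref{eq1_condition} reduces the lemma to the homogeneous case.
\item It shows that the lemma only needs a fixed positive proportion of the columns to satisfy $\alpha_n(i)\ge\Delta\log n/n$ for a suitable $\Delta$.
\end{itemize}
The cost is a harmless factor $1/2$ in the exponents. The paper's argument, for its part, discards no columns and uses \eqref{eq1_condition} only through the bound $\sum_i\exp(-\delta\alpha_n(i)n/q^2)\le 1$ at the single value $\epsilon=\delta/q^2$.
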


\begin{proof}
Let $\gamma=(q-1)/q$ and $x_i=\exp(-\al_n(i)/q^2)$ for $1\le i\le n+u$. By the condition \eqref{eq1_condition},
\begin{equation*}
\sum_{i=1}^{n+u}x_i^{\delta n} = \sum_{i=1}^{n+u}\exp(-\al_n(i)\delta n/q^2) \to 0
\end{equation*}
as $n \to \infty$, so $\sum_{i=1}^{n+u}x_i^{\delta n} \le 1$ when $n$ is sufficiently large. In that case, Jensen's inequality gives
\begin{equation*}
\frac{1}{n+u}\sum_{i=1}^{n+u}x_i^m \le \left(\frac{1}{n+u}\sum_{i=1}^{n+u}x_i^{\delta n}\right)^{\frac{m}{\delta n}}
\le (n+u)^{-\frac{m}{\delta n}}.
\end{equation*}
By the AM--GM inequality,
\begin{align*}
\prod_{i=1}^{n+u}\left(q^{-1}+\gamma x_i^m\right)
& \le \left(q^{-1}+\frac{\gamma}{n+u}\sum_{i=1}^{n+u}x_i^m\right)^{n+u} \\
& \le \left(1-\gamma\left(1-(n+u)^{-\frac{m}{\delta n}}\right)\right)^{n+u} \\
& \le \exp\left(-\gamma\left(1-n^{-\frac{m}{\delta n}}\right)(n+u)\right) \\
& \le \exp\left(-\gamma(1-e^{-t})n\right)
\end{align*}
where $t=\frac{m\log n}{\delta n}$. Now we split the sum into the ranges $t<1$ and $t \ge 1$. 

First assume that $t<1$. Then $1-e^{-t} \ge c_0 t$ for some absolute constant $c_0>0$, so
\begin{equation*}
\prod_{i=1}^{n+u}\left(q^{-1}+\gamma x_i^m\right)
\le \exp\left(-\gamma c_0 tn\right)
= n^{-\gamma c_0m/\delta}.
\end{equation*}
Assume that $\delta$ is sufficiently small so that $\gamma c_0 / \delta > 2$. The contribution from the terms with $t<1$ is at most
\begin{equation*}
\sum_{m=1}^{\lf \delta n\rf} \binom{n}{m}C^m n^{-\gamma c_0m/\delta}
\le \sum_{m=1}^{\infty} (Cn^{1-\gamma c_0/\delta})^m
\le \sum_{m=1}^{\infty} (Cn^{-1})^m = o(1).
\end{equation*}

Now assume that $t \ge 1$. Then 
\begin{equation*}
\prod_{i=1}^{n+u}\left(q^{-1}+\gamma x_i^m\right) 
\le \exp\left(-\gamma(1-e^{-1})n\right).
\end{equation*}
By Lemma \ref{lem2g}, after reducing $\delta$ further if necessary, we have
\begin{equation*}
\sum_{m=1}^{\lf\delta n\rf}\binom{n}{m}C^m\le \exp(fn)
\end{equation*}
for some $f<\gamma(1-e^{-1})$. Now the contribution from the terms with $t \ge 1$ is at most
\begin{equation*}
\exp(fn)\exp\left(-\gamma(1-e^{-1})n\right) = o(1). \qedhere
\end{equation*}
\end{proof}

\begin{proof}[Proof of Theorem \ref{thm1f}]
The case $G=0$ is trivial, so we may assume that $G \ne 0$. Let $a=p^d$ be the exponent of $G$, and let $\overline{A}(n)$ denote the reduction of $A(n)$ modulo $a$. Since every homomorphism to $G$ factors through reduction modulo $a$, we have
\begin{equation*}
\bE(\# \Sur(\cok(A(n)),G))
=\bE(\# \Sur(\cok(\overline{A}(n)),G))
=\sum_{F\in\Sur(V,G)}\bP(F\overline{A}(n)=0).
\end{equation*}
We note that the reduction modulo $a$ of an $\al_n(i)$-balanced random vector in $\zp^n$ is still $\al_n(i)$-balanced as a random vector in $R^n$.
In the remainder of the proof, we denote $\overline{A}(n)$ by $A(n)$ for simplicity. Let $\delta>0$ be a sufficiently small constant. Since
\begin{equation*}
|G|^{-u}=\sum_{F\in\Hom(V,G)}|G|^{-n-u},
\end{equation*}
we have
\begin{align*}
\left|\bE\bigl(\#\Sur(\cok(A(n)),G)\bigr)-|G|^{-u}\right| 
\le & \sum_{\substack{F\in\Sur(V,G)\\ F\text{ code of distance }\delta n}} \left|\bP(FA(n)=0)-|G|^{-n-u}\right| \\
+ & \sum_{\substack{F\in\Sur(V,G)\\ F\text{ not a code of distance }\delta n}} \bP(FA(n)=0) \\
+ & \sum_{\substack{F\in\Hom(V,G)\\ F\text{ not a code of distance }\delta n}} |G|^{-n-u}.
\end{align*}
The first term converges to $0$ as $n \to \infty$ by Lemma
\ref{lem2c}. Indeed, the $o(1)$ term in Lemma \ref{lem2c} is
independent of the code $F$, and there are at most $|G|^n$ choices for $F$. By the proof of \cite[Theorem 2.9]{Woo19}, if $\delta$ is sufficiently small, then the last term also converges to $0$ as $n \to \infty$. We fix such a $\delta$ and shrink it further below if necessary. It remains to bound the second term.

Since $G$ itself satisfies the defining inequality for $\delta$-robustness, every map $F\in\Hom(V,G)$ is $\delta$-robust for at least one subgroup of $G$. Moreover, if $F$ is not a code of distance $\delta n$, then there exists $\sigma\subset[n]$ with $|\sigma|<\delta n$ such that $FV_{\backslash\sigma}$ is a proper subgroup of $G$ and
\begin{equation*}
\#\{i\in[n]:Fv_i\notin FV_{\backslash\sigma}\}
\le |\sigma|<\delta n
\le \ell([G:FV_{\backslash\sigma}])\delta n.
\end{equation*}
Hence if $F$ is not a code of distance $\delta n$ and $\delta$-robust for $H$, then $H \ne G$. 
For each proper subgroup $H$ of $G$, fix a maximal chain
\begin{equation*}
H=G_{\ell(D)}\subset\cdots\subset G_2\subset G_1\subset G_0=G
\end{equation*}
($D=[G:H]$) and write $p_j=|G_{j-1}/G_j|$ for $1\le j\le \ell(D)$. If $F$ is $\delta$-robust for $H$, let 
\begin{equation*}
k_j=\#\{i\in[n]:Fv_i\in G_{j-1}\backslash G_j\}\quad(1\le j\le \ell(D)).
\end{equation*}
By the definition of $\delta$-robustness, we have $\sum_{j} k_j = \#\{ i \in [n] : Fv_i \notin H \} \le \ell(D)\delta n$.

Since there are only finitely many proper subgroups $H$ of $G$ and finitely many indices $j$ for each of them, we may assume that $\delta>0$ is sufficiently small so that Lemma \ref{lem2h} holds for $q=p_j$, $C=|G_{j-1}|/|H|$, and $\delta$ replaced by $\ell(D)\delta$, for every proper subgroup $H$ of $G$ and every $1\le j\le \ell(D)$ in the fixed chain above.

Fix a proper subgroup $H$ of $G$ and use the chain above. Let
\begin{equation*}
C_j=\frac{|G_{j-1}|}{|H|}\quad\text{and}\quad
\gamma_j=\frac{p_j-1}{p_j}\quad(1\le j\le \ell(D)).
\end{equation*}
By applying Lemma \ref{lem2f} to the $i$-th column $A_i(n)$ of $A(n)$, we have
\begin{equation*}
\bP(FA_i(n)=0)\le
\left(|H|^{-1}+\exp(-\al_n(i)\delta n/a^2)\right)
\prod_{j=1}^{\ell(D)}
\left(p_j^{-1}+\gamma_j\exp(-\al_n(i)k_j/p_j^2)\right).
\end{equation*}
Combining this estimate with Lemma \ref{lem2e} and \eqref{eq:column-factorization-sec2}, the contribution of surjective maps $F$ that are $\delta$-robust for this fixed $H$ is at most
\begin{align*}
&|H|^n\prod_{i=1}^{n+u}
\left(|H|^{-1}+\exp(-\al_n(i)\delta n/a^2)\right) \\
\times \, & \sum_{\substack{0\le k_j\le \lf \ell(D)\delta n\rf\\ \text{not all }k_j\text{ are }0}} \prod_{j=1}^{\ell(D)}
\left( \binom{n}{k_j}C_j^{k_j} \prod_{i=1}^{n+u} \left(p_j^{-1}+\gamma_j\exp(-\al_n(i)k_j/p_j^2)\right)
\right).
\end{align*}
Here, the surjectivity of $F$ actually gives the stronger condition $k_1\ge 1$, since otherwise $F(V)\subseteq G_1$. We have replaced this by the weaker condition that not all $k_j$ are zero, which is sufficient for the desired upper bound.

In the above formula, the first factor converges to $|H|^{-u}$ as in the proof of Lemma \ref{lem2c}. For each $j$, the $k_j=0$ term in the corresponding sum is $1$, while the sum over $1\le k_j\le\lf\ell(D)\delta n\rf$ is $o(1)$ by our choice of $\delta$ and Lemma \ref{lem2h}. Hence the second factor is
\begin{equation*}
\prod_{j=1}^{\ell(D)} \left( \sum_{k_j=0}^{\lf \ell(D)\delta n\rf} \binom{n}{k_j}C_j^{k_j} \prod_{i=1}^{n+u} \left(p_j^{-1}+\gamma_j\exp(-\al_n(i)k_j/p_j^2)\right) \right) - 1 = \prod_{j=1}^{\ell(D)} (1+o(1)) - 1 = o(1).
\end{equation*}
Thus the contribution from maps that are $\delta$-robust for the fixed subgroup $H$ is $o(1)$. Summing over the finitely many proper subgroups $H \le G$ proves that the second term is $o(1)$. Hence
\begin{equation*}
\lim_{n\to\infty}\bE(\#\Sur(\cok(A(n)),G))=|G|^{-u}. \qedhere
\end{equation*}
\end{proof}

\begin{exmp} \label{exmp2i}
Let $(t_n)_{n \ge 2}$ be a sequence of positive integers such that
$1 \le t_n \le n+u$ and $t_n=n^{o(1)}$. Let $(\al_n)_{n \ge 2}$
be a sequence in $(0,1/2]$ such that, for every constant $\Delta>0$, $\al_n \ge \frac{\Delta \log n}{n}$ for all sufficiently large $n$. Set $\al_n(i)=\frac{\log n}{n}$
for $1 \le i \le t_n$ and $\al_n(i)=\al_n$ for
$t_n<i \le n+u$. For each $n \ge 2$, let $A(n)$ be a random matrix
in $\M_{n \times (n+u)}(\zp)$ with independent entries whose $i$-th column is $\al_n(i)$-balanced.

Fix $\epsilon>0$. Assume that $n$ is sufficiently large so that $t_n \le n^{\epsilon/2}$ and $\al_n \ge \frac{2 \epsilon^{-1} \log n}{n}$. Then
\begin{align*}
\sum_{i=1}^{n+u} \exp \left( - \epsilon \al_n(i) n \right) 
& = t_n \exp(-\epsilon \log n) + (n+u - t_n) \exp \left(-\epsilon \al_n n \right) \\
& \le \frac{n^{\epsilon/2}}{n^{\epsilon}} + (n+u) \exp \left( -\epsilon \frac{2 \epsilon^{-1} \log n}{n} n \right) \\
& \le \frac{1}{n^{\epsilon/2}} + \frac{n+u}{n^2}
\end{align*}
so
\begin{equation*}
\lim_{n \to \infty} \sum_{i=1}^{n+u} \exp \left( - \epsilon \al_n(i) n \right) =0.
\end{equation*}
By Theorem \ref{thm1c}, the cokernels $\cok(A(n))$ converge in distribution to the limiting distribution of Haar-random matrices. This example is not covered by the previous homogeneous results, such as Theorem \ref{thm1a} or its improvement \cite[Theorem 1.5(a)]{JLY26}.

This contrasts with the homogeneous critical case. For example, let $\zeta_n$ be a random element of $\zp$ satisfying $\bP(\zeta_n=0) = 1 - \frac{\log n}{n}$ and $\bP(\zeta_n=1) = \frac{\log n}{n}$, and let $A(n)$ be a random $n \times (n+u)$ matrix over $\zp$ whose entries are i.i.d. copies of $\zeta_n$. Then a fixed row of $A(n)$ is zero with probability $\left(1-\frac{\log n}{n}\right)^{n+u}$, so the probability that $A(n)$ has a zero row is
\begin{equation*}
1 - \left( 1 - \left(1-\frac{\log n}{n}\right)^{n+u} \right)^n,
\end{equation*}
which converges to $1-e^{-1}$ as $n \to \infty$ (cf. \cite[p. 4496]{Woo22}). Hence $\cok(A(n))$ cannot converge to the limiting distribution of Haar-random matrices. Thus Theorem \ref{thm1c} covers broader classes of random matrices than existing results under homogeneous balancedness conditions.
\end{exmp}

\section*{Acknowledgments}

Jungin Lee was supported by the National Research Foundation of Korea (NRF) grant funded by the Korea government (MSIT) (No. RS-2024-00334558 and No. RS-2025-02262988). 
Sungjin Park was supported by the National Research Foundation of Korea (NRF) grant funded by the Korea government (MSIT) (No. RS-2024-00334558).


\end{document}